\documentclass[a4paper,12pt]{amsart}
\usepackage{amsmath}
\usepackage{amssymb}
\usepackage{mathrsfs}
\makeatletter
\newcommand*{\rom}[1]{\expandafter\@slowromancap\romannumeral #1@}
\makeatother
\usepackage{enumerate}
\usepackage{ifthen}
\usepackage{graphicx}
\usepackage[T1]{fontenc} 
\usepackage{tabularx}
\usepackage{multirow}
\usepackage{color,soul}
\usepackage{tikz}
\usepackage{pgfplots}
\usepackage{upgreek}

\nonstopmode \numberwithin{equation}{section}
\newtheorem{thm}{Theorem}[section]
\newtheorem{cor}{Corollary}[section]
\newtheorem{lem}{Lemma}[section]

\theoremstyle{definition}

\newtheorem{rem}{Remark}[section]

\newtheorem{innercustomthm}{Theorem}
\newenvironment{customthm}[1]
  {\renewcommand\theinnercustomthm{#1}\innercustomthm}
  {\endinnercustomthm}

\newcounter{minutes}
\divide\time by 60
\newcounter{hours}
\multiply\time by 60
\addtocounter{minutes}{-\time}

\newcounter {own}
\def\theown {\thesection       .\arabic{own}}

{\qed\bigskip}

\newcounter{alphabet}
\newcounter{tmp}

\begin{document}
\title{Bloch and Landau constants for meromorphic functions}

\author{Md Firoz Ali}
\address{Md Firoz Ali,
 National Institute Of Technology Durgapur, West Bengal, India}
\email{ali.firoz89@gmail.com, fali.maths@nitdgp.ac.in}

\author{Shaesta Azim }
\address{Shaesta Azim, National Institute Of Technology Durgapur, West Bengal, India}
\email{taefaazim547@gmail.com}

\subjclass[2010]{Primary 30D30, 30D99}
\keywords{Bloch function, Bloch constant, Landau constant, meromorphic function.}

\def\thefootnote{}

\begin{abstract}

Let $\mathcal{M}_1(\lambda)$ be the class of all meromorphic functions $f$ in the unit disk $\mathbb{D}=\{z\in\mathbb{C}\}: |z|<1$ having a simple pole at $\lambda \in \overline{\mathbb{D}} \setminus \{0\}$ and satisfying the normalization $f'(0)=1$. Let $B(\lambda)$ and $L(\lambda)$ denote the Bloch and Landau constants, respectively, for this class. In this article, we first show that the Bloch constant $B(1)$ and the Landau constant $L(1)$ are infinite. Using these results and a conformal mapping technique, we establish that $B(p)$ and $L(p)$ are likewise infinite for any $p \in (0,1)$, thereby refuting a recent conjecture. Finally, we extend our study to the class of meromorphic functions having two simple poles and prove that their associated Bloch and Landau constants also remain infinite.

\end{abstract}

\thanks{}

\maketitle
\pagestyle{myheadings}
\markboth{Md Firoz Ali, and Shaesta Azim}{Bloch and Landau constants for meromorphic functions}

\section{Introduction}
Let $\mathbb{C}$ denote the complex plane. For $a \in \mathbb{C}$ and $r > 0$, we denote the disk centered at $a$ with radius $r$ by $\mathbb{D}(a, r) = \{z \in \mathbb{C} : |z - a| < r\}$, and its boundary by $\partial \mathbb{D}(a, r) = \{z \in \mathbb{C} : |z - a| = r\}$. We let $\mathbb{D} = \mathbb{D}(0, 1)$ denote the unit disk. Let $\mathcal{A}(\mathbb{D})$ be the class of all analytic functions on $\mathbb{D}$, and let $\mathcal{A}_1 = \{f \in \mathcal{A}(\mathbb{D}) : f'(0) = 1\}$.\\

Finding the exact value of the Bloch constant remains one of the longstanding open problems in geometric function theory. This problem originated with Bloch's discovery that for any function $f \in \mathcal{A}_1$, there exists a subdomain of $\mathbb{D}$ on which $f$ is univalent and whose image under $f$ is a disk, known as a schlicht disk. In 1929, Landau \cite{Landau-1929} defined the Bloch constant $B$ as $B = \inf\{B_f : f \in \mathcal{A}_1\}$, where $B_f$ denotes the radius of the largest schlicht disk contained in $f(\mathbb{D})$. In the same article, Landau also introduced two related constants: the Landau constant, $L = \inf\{L_f : f \in \mathcal{A}_1\}$, where $L_f$ denotes the radius of the largest disk contained in $f(\mathbb{D})$; and the univalent Bloch constant,$$\mathcal{U} = \inf \{ B_f : f \in \mathcal{A}_1 \text{ and } f \text{ is univalent} \}.$$ Landau \cite{Landau-1929} provided numerical estimates for these constants and proved that $0.39 \leq B \leq L < 0.555$ and $\mathcal{U} > 0.566$.\\

A function $f \in \mathcal{A}(\mathbb{D})$ is classified as a Bloch function if its semi-norm, defined by
$$\|f\| := \sup_{z \in \mathbb{D}} (1 - |z|^2) |f'(z)|,$$
is finite. In 1942, Seidel and Walsh \cite{Seidel-Walsh-1942} proved that $B_f$ is finite if  and only if $||f||$ is finite, which was first pointed out by Pommerenke  \cite{Pommerenke-1970}. In the same paper, Pommerenke  \cite{Pommerenke-1970} introduced the locally univalent Bloch constant $B_{\ell}$, defined as
$$B_{\ell}=\inf\{{B_{f}}:f\in \mathcal{A}_{1},~\text{and}~f'(z)\neq 0 ~\text{for}~z\in \mathbb{D}\}$$
and demonstrated that $B_{\ell}>1/2$.
These constants satisfy the fundamental inequality chain $B\leq B_{\ell}\leq L \leq \mathcal{U}$. For more details on Bloch function we refer to \cite{Cima-1979}.\\

Historically, the quest for the exact values of these constants has generated significant literature. In 1937, Ahlfors and Grunsky \cite{Ahlfors-Grunsky-1937} derived an upper bound for the Bloch constant given by
$$B\leq\dfrac{1}{\sqrt{1+\sqrt{3}}}\dfrac{\Gamma(1/3)\Gamma(11/12)}{\Gamma(1/4)}\approx 0.4719 ,$$
a value they conjectured to be the exact value of $B$. Subsequently, Ahlfors \cite{Ahlfors-1938} refined the lower bounds for both $B$ and $L$, proving that $B \geq \sqrt{3}/4$ and $L \geq 1/2$. In 1943, Rademacher \cite{Rademacher-1943} further improved the upper bound for $L$:
$$L\leq\dfrac{\Gamma(1/3)\Gamma(5/6)}{\Gamma(1/6)}\approx 0.5433,$$
and likewise conjectured its sharpness. Chen and Gauthier \cite{Chen-Gauthier-1996} established the current best lower bound for $B$ as
$$B>\dfrac{\sqrt{3}}{4}+2\times10^{-4}.$$
More recently, in 2004, Chen and Shiba \cite{Chen-Shiba-2004} improved the lower bound for $L$ and proved $L>\dfrac{1}{2}+2\times10^{-8}$. In 2009, Carroll and Cerda \cite{Carrol-Cerda-2009} obtained the upper bound $\mathcal{U}\leq 0.6563937$ and in the same year, Skinner \cite{Skinner-2009} obtained the corresponding lower bound  $\mathcal{U}>0.5708858$ of $\mathcal{U}$.
These estimates are best available in the literature.
The best known lower bound for locally univalent function is $B_{\ell}>\dfrac{1}{2}+2\times10^{-8}$, which was given by Chen and Shiba \cite{Chen-Shiba-2004} in 2004.\\

The study of Bloch constants within the context of meromorphic functions represents another significant frontier in geometric function theory. In 1982, Minda \cite{Minda-1982} established that the Bloch constant for the family of locally univalent meromorphic functions on the complex plane $\mathbb{C}$ is exactly $\pi/2$. In the same study, Minda demonstrated that the Bloch constant for the broader class of all meromorphic functions on $\mathbb{C}$ lies within the interval $[\pi/3, 2\arctan(1/\sqrt{2})]$. This problem was later resolved by Bonk and Eremenko \cite{Bonk-Eremenko-2000} in 2000, who proved that the precise value of the Bloch constant for this class is $\arctan(\sqrt{8})$.\\

Let $\mathcal{M}(\lambda)$ be the class of all meromorphic functions $f$ in the unit disk $\mathbb{D}$ having a simple pole at $\lambda\in\overline{\mathbb{D}}\setminus\{0\}$. We define $\mathcal{M}_1(\lambda)$ as the subclass of $\mathcal{M}(\lambda)$ subject to the normalization $f'(0)=1$. For any $f \in \mathcal{M}_1(\lambda)$, let $L_f(\lambda)$ and $B_f(\lambda)$ denote the radii of the largest disk and the largest schlicht disk, respectively, contained within the image $f(\mathbb{D})$. The Landau and Bloch constants for the class $\mathcal{M}_1(\lambda)$ are then defined by
$$L(\lambda) = \inf \{ L_f(\lambda) : f \in \mathcal{M}_1(\lambda) \} \quad \text{and} \quad B(\lambda) = \inf \{ B_f(\lambda) : f \in \mathcal{M}_1(\lambda) \}.$$
We observe that if $\lambda = \rho e^{i\alpha}$ and $f \in \mathcal{M}_1(\lambda)$, then the rotation $f_\alpha(z) := e^{-i\alpha} f(e^{i\alpha}z)$ belongs to the class $\mathcal{M}_1(\rho)$. Since $L_f(\lambda) = L_{f_\alpha}(\rho)$ and $B_f(\lambda) = B_{f_\alpha}(\rho)$, it follows that $L(\lambda) = L(\rho)$ and $B(\lambda) = B(\rho)$. Consequently, without loss of generality, we can assume that $0< \lambda\le 1$.\\

For $\lambda = p \in (0,1)$, we adopt the notation $\mathcal{A}(p) := \mathcal{M}(p)$ and $\mathcal{A}_1(p) := \mathcal{M}_1(p)$. The notion of the Landau and Bloch constants for functions in $\mathcal{A}_1(p)$ were introduced by Bhowmik and Sen \cite{Bhomik-Sen-2023}.
Indeed, Bhowmik and Sen \cite{Bhomik-Sen-2023} defined ${L}(p)$ and $B(p)$ for the class $\mathcal{A}(p)$ with $f'(0)\ne 0$. But it is analytically more convenient to define them for the normalized subclass $\mathcal{A}_1(p)$ to maintain consistency with the classical class $\mathcal{A}_1$.
For the class $\mathcal{A}(p)$, Bhowmik and Sen \cite{Bhomik-Sen-2023a} proved that
$$B(p)\geq(8-\sqrt{63})^2 p^2|f'(0)|\quad\text{and}\quad L(p)\geq\dfrac{(9-4\sqrt{5})p^2|f'(0)|}{8}.$$
Subsequently, these bounds were refined in \cite{Bhomik-Sen-2023}, yielding the improved estimates detailed below.

\begin{customthm}{A}\label{s1-thm-A}\cite{Bhomik-Sen-2023}
Let $B$ and $L$ be the Bloch and Landau constant for the class $\mathcal{A}_{1}$. Then  $$B(p)\geq\frac{4p|f'(0)|B}{(1+p)^2}\quad\text{and}\quad L(p)\geq\frac{4p|f'(0)|L}{(1+p)^2}.$$
\end{customthm}

In Theorem \ref{s1-thm-A}, if one allow $p\to 1^-$ (with the normalization $f'(0)$=1) then $B(1)\geq B$ and $L(1)\geq L$. Moreover, Bhowmik and Sen \cite{Bhomik-Sen-2023} remarked that one can easily show that $B(1)= B$ and $L(1)= L$ which led them to conjecture that the bounds provided in Theorem \ref{s1-thm-A} constitute the exact values for $B(p)$ and $L(p)$.\\

The study of Bloch and Landau constants for harmonic and logharmonic functions has gained significant popularity in recent years. For a detailed review of the literature, we refer to the following papers and the references therein \cite{Bhomik-Sen-2025, Chang-Ponnusamy-Qiao-Qie-2026, Liu-Luo-Ponnusamy-2024, Sen-2026}.\\

In this article, we first establish that the Bloch constant $B(1)$ and the Landau constant $L(1)$ are infinite, and using this, we show that the conjecture by Bhowmik and Sen \cite{Bhomik-Sen-2023} concerning the exact values of the Bloch constant $B(p)$ and the Landau constant $L(p)$ is not true. Furthermore, we extend our investigation of Bloch and Landau constants to the class of meromorphic functions which are analytic in the unit disk except for two prescribed simple poles.

\section{Bloch and Landau constants for the class $\mathcal{M}_1(\lambda)$}

To provide context for our main findings, we begin by examining a specific case that served as the primary motivation for this study. This case illustrates the geometric phenomena that our results generalize.\\

A function $f\in \mathcal{A}(\mathbb{D})$ having a simple pole on $\partial\mathbb{D}$ is said to be a concave function if the complement of $f(\mathbb{D})$ is an unbounded convex domain.
For $\alpha\in(1,2]$, let $Co(\alpha)$ denote the class of concave univalent functions with opening angle $\pi\alpha$ at infinity. A function $f\in\mathcal{A}(\mathbb{D})$ is in $Co(\alpha)$ if it satisfies the following conditions:
\begin{enumerate}[(i)]
    \item $f\in \mathcal{A}_1$ with $f(0)=0$ and $f(1)=\infty$,
    \item $f$ is univalent in $\mathbb{D}$ and $\mathbb{C}\setminus f(\mathbb{D})$ is a convex domain,
    \item The opening angle of at $\infty$ is less than or equal to $\pi\alpha,~\alpha\in (1,2]$.
\end{enumerate}
For this class, Avkhadiev and Wirths \cite{Avkhadiev-Wirths-2005} established that the intersection of all image domains is a specific half-plane:
$$\bigcap_{f\in Co(\alpha)}f(\mathbb{D})=\Omega_\alpha:=\left\{w:~\text{Re}~w>-\dfrac{1}{2\alpha}\right\}.$$
It follows directly from this result that every $f \in Co(\alpha)$ contains the half-plane $\Omega_\alpha$. Since $\Omega_\alpha$ contains disks of arbitrarily large radii, for instance, $\mathbb{D}(r, r) \subset \Omega_\alpha$ for any $r > 0$, the Bloch and Landau constants for the class $Co(\alpha)$ are necessarily infinite.\\

Our first result demonstrates that if a function $f$ is analytic in the unit disk $\mathbb{D}$ and has a simple pole at the boundary point $z=1$, then the radius of the largest schlicht disk contained in $f(\mathbb{D})$ is infinite.

\begin{thm}\label{s1-thm-002}
If $f \in \mathcal{M}_1(1)$ then the radii $B_{f}(1)$ and ${L}_{f}(1)$ are infinite.


\end{thm}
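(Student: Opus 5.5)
The plan is to work locally at the boundary pole $z=1$ and show that $f$ maps a small one-sided neighbourhood of $1$ in $\mathbb{D}$ univalently onto a region containing a whole sector at $\infty$ of opening angle close to $\pi$. Such a sector contains disks of arbitrarily large radius. Since $B_f(1)\le L_f(1)$, it suffices to show $B_f(1)=\infty$. The normalization $f'(0)=1$ plays no role.

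\emph{Step 1: reduce to $g=1/f$ near $1$.} I interpret ``simple pole at $1$'' in the usual way: there are $\delta>0$ and a function $F$ meromorphic in $\mathbb{D}(1,\delta)$ with $F=f$ on $\mathbb{D}\cap\mathbb{D}(1,\delta)$, and $F$ has a simple pole at $1$. This means $F(z)=a/(z-1)+h(z)$ with $a\neq 0$ and $h$ analytic. Set $g=1/F$, shrinking $\delta$ so that $F$ has no zeros in $\mathbb{D}(1,\delta)$. Then $g$ is analytic in $\mathbb{D}(1,\delta)$ with $g(1)=0$ and $g'(1)=1/a\neq 0$. Shrinking $\delta$ further, $g$ is univalent on $\mathbb{D}(1,\delta)$, so $f=1/g$ is univalent on $U:=\mathbb{D}\cap\mathbb{D}(1,\delta)$.

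\emph{Step 2: $g(U)$ contains a sector at $0$.} Fix $\eta\in(0,\pi/4)$. Because $g$ is conformal at $1$, it maps the angular region $\{z\in\mathbb{D}: |\arg(1-z)|<\pi/2-\eta,\ |z-1|<\rho\}$ approximately onto a rotated copy of itself. Concretely, writing $g(z)=(z-1)/a\,(1+O(|z-1|))$, the arguments are distorted by only $o(1)$ as $z\to 1$. Hence there exist $\rho_0>0$ and a direction $\theta_0=\arg(-1/a)$ such that
\[
S:=\{w: 0<|w|<\rho_0,\ |\arg w-\theta_0|<\pi/2-2\eta\}\subset g(U).
\]
I would prove this containment with the argument principle (or Rouch\'e) applied to $g$ on the boundary of a slightly larger angular region. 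This is the main technical step: one must check that every point of $S$ is attained from inside $\mathbb{D}$ and not from outside. The cleanest route is to compare $g$ with its linear part $(z-1)/a$ on the boundary of the region $\{|\arg(1-z)|<\pi/2-\eta,\ |z-1|<\rho\}$, which lies in $\mathbb{D}$ for small $\rho$.

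\emph{Step 3: invert.} The map $w\mapsto 1/w$ sends $S$ onto
\[
S^*=\{\zeta: |\zeta|>1/\rho_0,\ |\arg\zeta+\theta_0|<\pi/2-2\eta\}.
\]
Thus $f=1/g$ maps the subdomain $g^{-1}(S)\subset U$ univalently onto $S^*$. The sector $S^*$ has positive opening angle, so it contains the disks $\mathbb{D}(te^{-i\theta_0},\,t\sin(\pi/2-2\eta)/2)$ for all large $t$. Each such disk $D_t$ is the univalent image under $f$ of the subdomain $f^{-1}(D_t)\cap g^{-1}(S)$, so it is a schlicht disk. Its radius tends to $\infty$ as $t\to\infty$, hence $B_f(1)=\infty$, and therefore $L_f(1)\ge B_f(1)=\infty$.
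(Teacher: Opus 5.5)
Your proof is correct, but it takes a different route from the paper's.

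\textbf{The paper's route.} The paper never examines the geometry near the pole. It writes $f(z)=h(z)/(z-1)$ with $h(1)\neq 0$ and computes
$(1-t^2)|f'(t)|=\frac{1+t}{1-t}\,|h'(t)(t-1)-h(t)|\to\infty$ as $t\to 1^-$, so $\|f\|=\infty$. It then invokes the Seidel--Walsh/Pommerenke criterion that $B_f<\infty$ if and only if $\|f\|<\infty$. The direction it needs is just Bloch's theorem applied to $f\circ\varphi_t$, where $\varphi_t$ is the disk automorphism sending $0$ to $t$; this gives schlicht disks of radius at least $B(1-t^2)|f'(t)|$.

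\textbf{Comparison.} The paper's argument is two lines, but it leans on Bloch's theorem and says nothing about where the large disks are. Your argument is elementary and self-contained: it uses only the local conformality of $g=1/F$ at $1$. It also proves more. You show that $f$ covers univalently an explicit sector at $\infty$ whose opening angle is arbitrarily close to $\pi$. This is the local analogue of the Avkhadiev--Wirths half-plane that motivated the theorem.

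\textbf{Simplifying your Step 2.} You do not need the argument principle or Rouch\'e here. Since $g$ is already univalent on $\mathbb{D}(1,\delta)$, its inverse satisfies $g^{-1}(w)=1+aw+O(w^2)$. Hence every preimage $z$ of a point of $S$ satisfies $|\arg(1-z)|<\pi/2-2\eta+o(1)$ and $|z-1|=O(\rho_0)$. Such a point lies in $\mathbb{D}$ as soon as $|1-z|<2\sin\eta$, because $|z|^2=1-2|1-z|\cos\arg(1-z)+|1-z|^2$. This settles the ``attained from inside $\mathbb{D}$'' question immediately.
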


\begin{proof}
To show that $B_f(1)$ is infinite, it is sufficient to show that the Bloch semi-norm
$$||f||=\sup_{z\in\mathbb{D}}(1-|z|^2)|f'(z)|$$
is unbounded. If $f \in \mathcal{M}_1(1)$, $f$ has a simple pole at $z=1$ and consequently, in a deleted neighborhood of the pole, $f$ may be represented as
$$
f(z) = \frac{h(z)}{z-1}
$$
where $h$ is analytic at $z=1$ and $h(1)\neq 0$.
By evaluating the semi-norm along the positive real axis $t \to 1^-$, we obtain the following lower bound:
\begin{align*}
    ||f||
    &=\sup_{z\in\mathbb{D}}(1-|z|^2)|f'(z)|\\
    &\geq \limsup_{t \to 1^-} (1-t^2)|f'(t)|\\
    &=\limsup_{t \to 1^-}(1-t^2)\left|\dfrac{h'(t)(t-1)-h(t)}{(t-1)^2}\right|\\
    &=\limsup_{t \to 1^-}\dfrac{1+t}{1-t}\left|h'(t)(t-1)-h(t)\right|\\
    &= \infty.
\end{align*}
This implies that $B_f(1)$ is infinite. The infinitude of $L_f(1)$ follows immediately from the inequality $L_f(\lambda) \geq B_f(\lambda)$.

\end{proof}

\begin{cor}\label{s1-thm-005}
Then Bloch constant $B(1)$ and the Landau constant $L(1)$ for the class $\mathcal{M}_{1}(1)$ are infinite.
\end{cor}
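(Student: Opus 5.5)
This corollary follows directly from Theorem \ref{s1-thm-002}, because the constants are infima over the class. By definition,
$$B(1)=\inf\{B_f(1): f\in\mathcal{M}_1(1)\}\quad\text{and}\quad L(1)=\inf\{L_f(1): f\in\mathcal{M}_1(1)\}.$$
Theorem \ref{s1-thm-002} says that $B_f(1)=\infty$ and $L_f(1)=\infty$ for every $f\in\mathcal{M}_1(1)$. So each infimum is taken over a set of values all equal to $+\infty$, and hence equals $+\infty$.

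One point needs checking: the class $\mathcal{M}_1(1)$ must be nonempty, so that we are not taking the infimum over the empty set. The convention $\inf\emptyset=+\infty$ would give the same answer anyway, but exhibiting a member is cleaner. I would take
$$f(z)=\frac{z}{1-z},$$
which is analytic in $\mathbb{D}$, has a simple pole at $z=1$, and satisfies $f'(0)=1$. Thus $f\in\mathcal{M}_1(1)$, the infimum is over a nonempty set of infinite values, and $B(1)=L(1)=\infty$.

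I expect no real obstacle at this level; all the analytic content sits in Theorem \ref{s1-thm-002}. The one step I would scrutinize is the passage there from an unbounded Bloch semi-norm $\|f\|$ to $B_f(1)=\infty$. That passage relies on the Seidel--Walsh/Pommerenke equivalence between finiteness of $B_f$ and finiteness of $\|f\|$. That equivalence is stated for analytic functions in $\mathbb{D}$, and our $f$ is indeed analytic in the open disk, since the pole lies on the boundary. I would therefore cite it as the paper does and conclude.
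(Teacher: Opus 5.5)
Your proposal is correct and follows the paper's route: the corollary is just Theorem \ref{s1-thm-002} combined with the definition of $B(1)$ and $L(1)$ as infima over $\mathcal{M}_1(1)$, and checking that $z/(1-z)\in\mathcal{M}_1(1)$ is a harmless extra confirming the class is nonempty. The step you flag is sound, because the direction actually needed, $\|f\|=\infty\Rightarrow B_f(1)=\infty$, follows from Bloch's theorem applied to $f\circ\varphi_{z_0}$, where $\varphi_{z_0}(z)=(z+z_0)/(1+\overline{z_0}z)$, which gives $B_f(1)\ge B\,(1-|z_0|^2)|f'(z_0)|$ for every $z_0\in\mathbb{D}$.
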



\begin{cor}\label{s1-thm-012}
If $f \in \mathcal{A}_1$ is analytic in the unit disk $\mathbb{D}$ and has a simple pole at $\lambda \in \partial \mathbb{D}$, then the radius of the largest schlicht disk $B_f(\lambda)$ and the radius of the largest disk $L_f(\lambda)$ contained in the image $f(\mathbb{D})$ are infinite.
\end{cor}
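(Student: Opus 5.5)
The plan is to reduce Corollary \ref{s1-thm-012} to Theorem \ref{s1-thm-002} by a rotation, exactly as in the discussion following the definition of $\mathcal{M}_1(\lambda)$. Write $\lambda=e^{i\alpha}$ with $\alpha\in\mathbb{R}$, and consider the rotated function
$$f_\alpha(z):=e^{-i\alpha}f(e^{i\alpha}z),\qquad z\in\mathbb{D}.$$
First I would check that $f_\alpha\in\mathcal{M}_1(1)$. Since $z\mapsto e^{i\alpha}z$ maps $\mathbb{D}$ onto itself, $f_\alpha$ is analytic in $\mathbb{D}$. Moreover $f_\alpha'(0)=e^{-i\alpha}e^{i\alpha}f'(0)=1$. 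Finally, $f_\alpha$ has a simple pole at $z=1$, because $e^{i\alpha}\cdot 1=\lambda$ and composing with a rotation and multiplying by a nonzero constant preserve the order of a pole. Concretely, if $f(w)=h(w)/(w-\lambda)$ near $\lambda$ with $h(\lambda)\neq0$, then
$$f_\alpha(z)=\frac{e^{-2i\alpha}h(e^{i\alpha}z)}{z-1},$$
and the numerator does not vanish at $z=1$.

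Next I would compare the image domains. We have $f_\alpha(\mathbb{D})=e^{-i\alpha}f(\mathbb{D})$, which is a rigid rotation of $f(\mathbb{D})$ about the origin. A disk $\mathbb{D}(a,r)\subset f(\mathbb{D})$ corresponds to the disk $\mathbb{D}(e^{-i\alpha}a,r)\subset f_\alpha(\mathbb{D})$ of the same radius. Schlichtness also transfers: if $U\subset\mathbb{D}$ is a domain on which $f$ is univalent onto a disk, then $e^{-i\alpha}U$ is a domain on which $f_\alpha$ is univalent onto the rotated disk. Hence $L_f(\lambda)=L_{f_\alpha}(1)$ and $B_f(\lambda)=B_{f_\alpha}(1)$.

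Applying Theorem \ref{s1-thm-002} to $f_\alpha$ shows that both radii are infinite. Alternatively, one could rerun the proof of Theorem \ref{s1-thm-002} directly, estimating $(1-t^2)|f'(t\lambda)|$ as $t\to1^-$, which grows like $(1+t)|h(t\lambda)|/(1-t)\to\infty$. The Seidel--Walsh/Pommerenke criterion then gives $B_f(\lambda)=\infty$, and $L_f(\lambda)\ge B_f(\lambda)$ gives $L_f(\lambda)=\infty$.

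There is no real obstacle. The only point needing care is the bookkeeping showing that the rotation preserves the normalization, the simple-pole property, and the schlicht-disk radius. This is immediate since rotations are isometries and conformal automorphisms of $\mathbb{D}$.
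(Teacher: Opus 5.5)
Your proposal is correct and matches the paper's intended route. The paper gives no separate proof of this corollary: it follows from Theorem \ref{s1-thm-002} via the rotation $f_\alpha(z)=e^{-i\alpha}f(e^{i\alpha}z)$, which the introduction already notes satisfies $B_f(\lambda)=B_{f_\alpha}(1)$ and $L_f(\lambda)=L_{f_\alpha}(1)$ — and your bookkeeping of the normalization, the simple pole moving to $z=1$, and the transfer of schlicht disks fills this in correctly.
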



Before proceeding to our subsequent findings, we examine another specific instance that provided the necessary intuition for the next theorem. Let $\mathcal{S}(p)$, $0<p<1$, be the class of all univalent functions in $\mathcal{A}(p)$ with the normalization $f(0)=0=f'(0)-1$.
If $f\in \mathcal{S}(p)$ then it is well known that \cite[Theorem 41, Page-249, Vol-II]{Goodman-1983} $f(\mathbb{D})$ contains the disjoint domains:
$$D_1=\left\{w\in\mathbb{C}:|w|<\dfrac{p}{(1+p)^2}\right\}~~\text{and}~~ D_2=\left\{w\in\mathbb{C}:|w|>\dfrac{p}{(1-p)^2}\right\}.$$
The existence of the exterior domain $D_2$ implies that $f(\mathbb{D})$ contains a neighborhood of infinity. Consequently, for any $R > 0$, one may select a center $a \in f(\mathbb{D})$ such that the distance $d(a, \partial D_2) > R$, ensuring that the disk $\mathbb{D}(a, R)$ is contained within $f(\mathbb{D})$. Since $f$ is univalent, every such disk is necessarily schlicht. It follows that for each $f \in \mathcal{S}(p)$, the radius $B_f(p)$ is infinite. By extension, the Bloch constant $B(p)$ and the Landau constant $L(p)$ for the class $\mathcal{S}(p)$ are not finite. Our next result demonstrates that this conclusion persists even when the requirement of univalency is omitted.

\begin{thm}\label{s1-thm-001}
For any $p \in (0,1)$, the Bloch constant $B(p)$ and the Landau constant $L(p)$ for the class $\mathcal{A}_{1}(p)$ are infinite.
\end{thm}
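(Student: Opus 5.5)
The plan is to reduce Theorem \ref{s1-thm-001} to Theorem \ref{s1-thm-002} by a simple change of variable that moves the interior pole $p$ to the boundary point $1$.

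Fix $f\in\mathcal{A}_1(p)$ and define
$$g(z)=\frac{1}{p}\,f(pz),\qquad z\in\mathbb{D}.$$
The map $z\mapsto pz$ sends $\mathbb{D}$ onto the subdisk $\mathbb{D}(0,p)\subset\mathbb{D}$, with $0\mapsto 0$ and $1\mapsto p$. Since $f$ is analytic in $\mathbb{D}$ except at $p$, the function $g$ is analytic in $\mathbb{D}$. Since $f$ has a simple pole at $p$, $g$ has a simple pole at $z=1$. Moreover $g'(0)=f'(0)=1$, so $g\in\mathcal{M}_1(1)$. Theorem \ref{s1-thm-002} then gives $B_g(1)=\infty$: for every $R>0$ there is a subdomain $U\subset\mathbb{D}$ on which $g$ is univalent and $g(U)=\mathbb{D}(a,R)$ for some $a$.

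Next I transfer this back to $f$. Put $V=pU\subset\mathbb{D}(0,p)\subset\mathbb{D}$. On $V$ the function $f(w)=p\,g(w/p)$ is univalent, and $f(V)=p\,g(U)=\mathbb{D}(pa,pR)$. This is a schlicht disk of radius $pR$ in $f(\mathbb{D})$. Since $R$ is arbitrary and $p>0$ is fixed, $B_f(p)=\infty$, and hence $L_f(p)\ge B_f(p)=\infty$. As this holds for every $f\in\mathcal{A}_1(p)$, taking the infimum gives $B(p)=L(p)=\infty$.

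The only delicate point is the analyticity of $g$ in all of $\mathbb{D}$. This needs $f$ to have no poles in $\mathbb{D}(0,p)$, which holds if $p$ is the only pole, as the definition of $\mathcal{A}(p)$ intends. If one wanted to allow further poles, I would fall back on a direct local argument. Near $p$, the function $1/f$ is analytic with a simple zero at $p$, so it is univalent on a small disk $\mathbb{D}(p,\delta)$. Hence $f$ maps $\mathbb{D}(p,\delta)\setminus\{p\}$ univalently onto a punctured neighbourhood of $\infty$, which contains $\{|w|>M\}$ for some $M$. That set contains schlicht disks of every radius, exactly as in the discussion of $\mathcal{S}(p)$ preceding the theorem. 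This argument also handles the first route's case without appeal to Theorem \ref{s1-thm-002}.
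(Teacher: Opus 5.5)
Your proof is correct, but it reaches the boundary-pole case by a different reduction than the paper uses.

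\textbf{The paper's route.} The paper restricts $f$ to the slit disk $\Omega_p=\mathbb{D}\setminus[p,1)$ and pulls back by the Koebe-type conformal map $\eta=K(rk)$, where $r=4p/(1+p)^2$. This gives the normalized function $g=\frac{(1+p)^2}{4p}\,f\circ\eta$, whose singularity sits at the boundary point corresponding to the slit tip, and the paper then invokes Corollary~\ref{s1-thm-012}. Because $\eta(\mathbb{D})$ omits only a slit, $f(\Omega_p)$ differs from $f(\mathbb{D})$ only by the image of a curve. The paper's subsequent Remark uses this to assert the exact relation $B_f(p)=\frac{4p}{(1+p)^2}B_g$, which ties the problem to the constant of Theorem~A.

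\textbf{What your dilation gives.} Your map $z\mapsto pz$ only sees $f(\mathbb{D}(0,p))$. That suffices for a lower bound but yields no such identity. In exchange it is cleaner in two ways. First, $g'(0)=f'(0)$ exactly. Second, the pole of $g$ at $z=1$ is genuinely simple, so Theorem~\ref{s1-thm-002} applies verbatim. In the paper's construction the slit tip is a critical boundary point of $\eta$: $k'(1)=0$ for $k(z)=z/(1+z)^2$, so $\eta(z)-p$ vanishes to second order at $z=1$. Hence $f\circ\eta$ actually has a double pole there, and Corollary~\ref{s1-thm-012}, which is stated for simple poles, needs a (trivial) extension.

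\textbf{Your fallback argument.} The local argument is the most elementary of all. It uses only that $1/f$ is univalent near its simple zero, and it needs no normalization, no Bloch seminorm, and no hypothesis on other poles. It shows at once that any meromorphic function with a pole inside $\mathbb{D}$ has infinite Bloch radius. The same observation also settles the interior-pole cases of Theorem~\ref{s1-thm-015} without the slit-mapping Lemma~\ref{s1-thm-010}.
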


\begin{proof}
For $p \in (0,1)$, let $\Omega_p$ denote the unit disk slit along the real axis from $p$ to $1$, i.e.,
$$
\Omega_{p}=\mathbb{D}\setminus{[p,1)}.
$$
Let $k(z) = z/(1+z)^2$ denote the Koebe function (rotation) and $K$ its inverse. For $r = 4p/(1+p)^2$, the function $\eta(z) = K(rk(z))$ maps $\mathbb{D}$ conformally onto $\Omega_p$. Letting $\zeta(z)$ denote the inverse of $\eta(z)$, we observe that
$$
\zeta'(0) = \frac{(1+p)^2}{4p}.
$$

Let $f \in \mathcal{A}_1(p)$ and consider the restriction $f_1 \equiv f|_{\Omega_p}$. For each such $f_{1}$, there exist a function $g\in \mathcal{A}_{1}$ with the property that $g$ has a simple pole at the point $\zeta(p)\in \partial \mathbb{D}$ such that
\begin{equation} \label{s1-001}
f_{1}(z)=\dfrac{4p}{(1+p)^2}g\circ \zeta(z),~z\in\Omega_{p}.
\end{equation}
It follows from \eqref{s1-001} that the image of the slit domain $\Omega_{p}$ under $f_1$ satisfies
\begin{equation} \label{s1-003}
    f(\Omega_{p})=f_{1}(\Omega_{p})=\dfrac{4p}{(1+p)^2}g(\mathbb{D})=:G.
\end{equation}
Let $B_f(p)$ and $B_g$ denote the radii of the largest schlicht disks contained within $f(\mathbb{D})$ and $g(\mathbb{D})$, respectively. Since $g\in \mathcal{A}_{1}$ with $g$ having a simple pole at the point $\zeta(p)\in \partial \mathbb{D}$, it follows from Corollary \ref{s1-thm-012} that $B_{g}$ is infinite. Consequently, from \eqref{s1-003} we can conclude that $B_f(p)$ is also infinite. As this holds for all $f \in \mathcal{A}_1(p)$, the Bloch constant $B(p)$ is also infinite. The Landau constant $L(p)$ is not finite follows immediately.

\end{proof}

\begin{rem}
In Theorem \ref{s1-thm-001}, since $f$ is analytic in $\mathbb{D} \setminus \{p\}$ and the segment $(p, 1)$ constitutes a portion of the boundary $\partial \Omega_p$, it follows that the image $f((p, 1))$ is contained in either $G$ or its boundary $\partial G$. Consequently, the image of the unit disk under $f$ satisfies the following inclusion relation
\begin{equation}\label{s1-005}
G \subset f(\mathbb{D}) \subset \overline{G}.
\end{equation}

It is noteworthy that the univalency of the composition $g \circ \zeta$ is equivalent to the univalency of $g$. Let $B_{f_1}$ denote the radius of the largest schlicht disk contained within $f_1(\Omega_p)$. Therefore, from \eqref{s1-001} and \eqref{s1-003}, we can conclude that
$$B_{f_1}=\dfrac{4p}{(1+p)^2}B_{g}.$$
From \eqref{s1-005}, one can further conclude that
$$B_{f}(p)=B_{f_1}=\dfrac{4p}{(1+p)^2}B_{g}.$$

By a similar geometric argument, if $L_f(p)$ and $L_g$ denote the radii of the largest disks (not necessarily schlicht) contained within $f(\mathbb{D})$ and $g(\mathbb{D})$, respectively then applying \eqref{s1-003} and \eqref{s1-005}, one can conclude that
$$L_{f}(p)=\dfrac{4p}{(1+p)^2}L_{g}.$$
\end{rem}


\subsection{Bloch constant for the class $\Sigma$}

Let $\Sigma$ denote the class of all univalent functions defined on the punctured unit disk $\mathbb{D}^* = \mathbb{D} \setminus \{0\}$ having a simple pole at the origin with residue $1$. Each $g \in \Sigma$ admits a Laurent series expansion of the form
$$
g(z)=\frac{1}{z}+\sum_{n=0}^{\infty} b^nz^n, \quad 0<|z|<1.
$$
It is a fundamental result in geometric function theory that any $g \in \Sigma$ maps $\mathbb{D}^*$ onto the complement of a compact connected set $E$. Let $g \in \Sigma$ and let $g(\mathbb{D}^*) = \mathbb{C} \setminus E$. Since the set $E$ is compact, its complement is a neighborhood of infinity. Consequently, for any $R > 0$, one may select a center $a \in \mathbb{C} \setminus E$ such that $\text{dist}(a, E) > R$, ensuring that the disk $\mathbb{D}(a, R)$ is contained within the image $g(\mathbb{D}^*)$. Furthermore, the univalency of $g$ guarantees that every such disk is schlicht. It follows that for each $g \in \Sigma$, the radius of the largest schlicht disk $B_g$ is infinite. Accordingly, the Bloch constant $B$ and the Landau constant $L$ for the class $\Sigma$ are not finite.

\section{Bloch constant for functions with two simple poles}
Let $\mathcal{M}(\lambda, \mu)$ be the class of all meromorphic functions $f$ in the unit disk $\mathbb{D}$ having two simple poles at $z=\lambda,\mu\in\overline{\mathbb{D}}\setminus\{0\}$ and $\mathcal{M}_1(\lambda,\mu)$ be the subclass of all meromorphic functions $f$ in $\mathcal{M}(\lambda,\mu)$ with the normalization $f'(0)=1$. For $f\in \mathcal{M}_1(\lambda,\mu)$, let ${L}_{f}(\lambda,\mu)$ and $B_{f}(\lambda,\mu)$ denote the radius of the largest disk and the radius of the largest schlicht disk, respectively, lying in  $f(\mathbb{D})$. Further, let
\begin{align*}
L(\lambda,\mu) &=\inf\{{L_{f}(\lambda,\mu)}:f\in\mathcal{M}_1(\lambda,\mu)\}\\[2mm]
\text{and}~B(\lambda,\mu)&=\inf\{{B_{f}(\lambda,\mu)}:f\in\mathcal{M}_1(\lambda,\mu)\}
\end{align*}
be the Landau and Bloch constant, respectively for the class $\mathcal{M}_1(\lambda,\mu)$.\\


The following lemma establishes the conformal mapping properties of a class of domains with multiple slits, extending the classical results of Robinson \cite{Robinson-1935}.

\begin{lem}\label{s1-thm-010}
Let $k(z)=z/(1-z)^2$ denote the Koebe function and $K(z)$ its inverse. For $x \in (0,1)$ and $\theta \in [0, 2\pi)$, define the parameters
\begin{align}\label{s1-020}
\rho=\rho(x):=\dfrac{4x}{(1+x)^2},\quad \xi(\rho,\theta):=1-2\rho\sin^2\frac{\theta}{2}-2i\rho\sin^2\frac{\theta}{2}\sqrt{\dfrac{1}{\rho\sin^2\frac{\theta}{2}}-1}.
\end{align}
\begin{enumerate}[(i)]
\item The function $\omega_\rho(z) = -e^{i\theta} K(\rho k(z))$ maps the unit disk $\mathbb{D}$ conformally onto the slit disk $\mathbb{D} \setminus L(x, \theta)$, where $L(x, \theta) = \{ te^{i\theta} : x \leq t < 1 \}$. Furthermore, the boundary correspondences are given by
$$ \omega_\rho(-1)=x e^{i\theta}\quad\text{and}\quad \omega_\rho\left(\xi(\rho,\theta)\right)=-1.$$

\item For $p,p_1\in(0,1)$, let $r=\rho(p)$, $r_1=\rho(p_1)$ and $-K(rk(p_1e^{i\theta}))=qe^{i\phi}$. Then the composition function
$$h(z)=-K(rk(-e^{i\theta}K(r_1k(z))))=-K(rk(\omega_{r_1}(z)))$$
maps the unit disk $\mathbb{D}$ conformally onto the domain $\Omega(p, p_1, \theta) := \mathbb{D} \setminus \{ [p, 1) \cup C(p_1, \theta) \}$, where the second slit is defined by the curve $C(p_1, \theta) := \{ -K(r k(t e^{i\theta})) : p_1 \leq t < 1 \}$. Furthermore, the boundary correspondences are given by
$$ h(-1)=qe^{i\phi}\quad\text{and}\quad h\left(\xi(r_1,\theta)\right)=p.$$
\end{enumerate}
\end{lem}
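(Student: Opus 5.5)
The plan is to reduce everything to the mapping properties of the Koebe function. Recall that $k(z)=z/(1-z)^2$ maps $\mathbb{D}$ conformally onto $\mathbb{C}\setminus(-\infty,-\tfrac14]$, with $k(-1)=-\tfrac14$ and $k(e^{i\varphi})=-1/(4\sin^2(\varphi/2))$ on the unit circle. Its inverse $K$ maps $\mathbb{C}\setminus(-\infty,-\tfrac14]$ conformally onto $\mathbb{D}$.

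For part (i), fix $\rho=\rho(x)\in(0,1)$. Then $\rho k$ maps $\mathbb{D}$ onto $\mathbb{C}\setminus(-\infty,-\rho/4]$. This region equals the domain of $K$ with the real segment $(-\tfrac14,-\rho/4]$ removed. Hence $K(\rho k(\mathbb{D}))=\mathbb{D}\setminus K\big((-\tfrac14,-\rho/4]\big)$. Since $K$ is real and increasing on $(-\tfrac14,0)$, this removed set is a segment $(-1,-y]$, where $k(-y)=-\rho/4$. Writing this out gives $y/(1+y)^2=x/(1+x)^2$, so $y=x$ because $t\mapsto t/(1+t)^2$ is injective on $(0,1)$. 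This is exactly where the choice $\rho=4x/(1+x)^2$ enters. Multiplying by $-e^{i\theta}$ rotates $\mathbb{D}\setminus(-1,-x]$ onto $\mathbb{D}\setminus L(x,\theta)$, and the composition is conformal. The first boundary correspondence is then immediate: $\omega_\rho(-1)=-e^{i\theta}K(-\rho/4)=-e^{i\theta}(-x)=xe^{i\theta}$.

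For the second correspondence we need a point $\xi\in\partial\mathbb{D}$ with $K(\rho k(\xi))=e^{-i\theta}$, that is, $\rho k(\xi)=k(e^{-i\theta})=-1/(4\sin^2(\theta/2))$. Put $s=\sin(\theta/2)$. The equation $\xi/(1-\xi)^2=-1/(4\rho s^2)$ becomes the quadratic
$$\xi^2-2(1-2\rho s^2)\xi+1=0,$$
whose roots are $\xi=1-2\rho s^2\pm 2i\rho s^2\sqrt{1/(\rho s^2)-1}$. Both roots have modulus $1$ (their product is $1$ and they are conjugate), and for $\rho s^2\le 1$ they are conjugates of each other.

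I expect the main obstacle to be choosing the correct root. The value $-1/(4s^2)$ lies on the cut $(-\infty,-\tfrac14]$, so $K$ has two boundary values there, $e^{\pm i\theta}$, one from each side of the cut. I will settle this by an orientation/continuity argument. Since $\rho k$ maps the upper half of $\mathbb{D}$ to the upper half-plane and $K$ preserves the upper half-plane as well, a boundary point $\xi$ with $\operatorname{Im}\xi<0$ is sent to the lower-side limit of $K$, namely $e^{-i\theta}$. This selects the root with the minus sign, which is $\xi(\rho,\theta)$ as defined in \eqref{s1-020}, and it gives $\omega_\rho(\xi(\rho,\theta))=-e^{i\theta}e^{-i\theta}=-1$.

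For part (ii), note that the case $\theta=0$ of (i), without the rotation, shows that $\phi(w):=-K(rk(w))$ maps $\mathbb{D}$ conformally onto $\mathbb{D}\setminus[p,1)$. By (i), $\omega_{r_1}$ maps $\mathbb{D}$ conformally onto $\mathbb{D}\setminus L(p_1,\theta)$. Therefore $h=\phi\circ\omega_{r_1}$ is conformal with image
$$\phi\big(\mathbb{D}\setminus L(p_1,\theta)\big)=\phi(\mathbb{D})\setminus\phi(L(p_1,\theta))=\mathbb{D}\setminus\big([p,1)\cup C(p_1,\theta)\big),$$
using the injectivity of $\phi$. The boundary values follow from (i). First, $h(-1)=\phi(p_1e^{i\theta})=-K(rk(p_1e^{i\theta}))=qe^{i\phi}$; this is a point of $\mathbb{D}$ that becomes the tip of the curved slit. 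Second, $h(\xi(r_1,\theta))=\phi(-1)=-K(-r/4)=p$, by the same computation as in (i). The only extra care needed is to implicitly assume $\theta\neq 0$ (or $p_1$ chosen appropriately), so that $L(p_1,\theta)$ lies in the interior of $\mathbb{D}$ and $\phi$ extends continuously to it.
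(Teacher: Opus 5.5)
Your route is the same as the paper's, and most steps are fine. You identify $K(\rho k(\mathbb{D}))$ as $\mathbb{D}\setminus(-1,-x]$; the paper cites Robinson here, and your derivation from the cut plane $\mathbb{C}\setminus(-\infty,-\rho/4]$ plus the injectivity of $t\mapsto t/(1+t)^2$ is a correct self-contained substitute. Then you rotate, solve a quadratic for the preimage of $-1$, and obtain (ii) by composing with your map $\phi=-K(rk(\cdot))$.

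\textbf{The gap is the root selection.} Your symmetry argument is right: $K\circ\rho k$ preserves the upper and lower half-disks. So for $\operatorname{Im}\xi<0$, the value $K(\rho k(\xi))$ is the boundary value of $K$ at $-1/(4\sin^2(\theta/2))$ taken from the lower half-plane, i.e.\ whichever of $e^{\pm i\theta}$ has negative imaginary part. That point is $e^{-i\theta}$ only when $\sin\theta>0$. For $\pi<\theta<2\pi$ it is $e^{i\theta}$, and then $\omega_\rho(\xi(\rho,\theta))=-e^{2i\theta}\neq -1$.

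A concrete counterexample: take $\theta=3\pi/2$ and $\rho=1/2$ (that is, $x=3-2\sqrt2$). Then $\xi(\rho,\theta)=e^{-i\pi/3}$ and $\rho k(\xi)=-1/2$. The lower-side value of $K$ there is $-i=e^{i\theta}$, so $\omega_\rho(\xi)=1$. The actual preimage of $-1$ is $\overline{\xi(\rho,\theta)}=e^{i\pi/3}$.

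So your step \emph{namely $e^{-i\theta}$} fails on half the parameter range. In fact the second boundary correspondence in (i), and with it $h(\xi(r_1,\theta))=p$ in (ii), is false as stated for $\pi<\theta<2\pi$. The paper's proof only says that solving the boundary equation yields $\xi(\rho,\theta)$, so it does not catch this either. Done carefully, your argument proves the corrected statement:
\begin{itemize}
\item For $0<\theta\le\pi$, use the root with the minus sign. At $\theta=\pi$ both conjugate roots map to $-1$, which is the base of the slit.
\item For $\pi<\theta<2\pi$, use the conjugate root $\overline{\xi(\rho,\theta)}$.
\item At $\theta=0$ the formula involves division by zero. It must be read as the limit $\xi=1$, where $\omega_\rho(z)\to-1$ as $z\to1$.
\end{itemize}

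Your closing caveat is also misdirected. $L(p_1,\theta)\subset\mathbb{D}$ for every $\theta$, and $\phi$ is analytic on $\mathbb{D}$, so nothing needs extending there and no restriction on $\theta$ is required. The only continuity you need is that of $h$ at the boundary points $-1$ and $\xi$. This follows from two facts:
\begin{itemize}
\item the Carath\'eodory extension of $\omega_{r_1}$ to $\overline{\mathbb{D}}$;
\item the continuity of $\phi$ at $-1$, which holds because $rk(-1)=-r/4$ lies off the cut of $K$.
\end{itemize}
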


\begin{proof}

$(i)~$ It follows from \cite[Lemma 1]{Robinson-1935}  that the function $K(\rho k(z))$ maps the unit disk onto $\mathbb{D}\setminus{(-1,-x)}$. For fix $\theta\in[0,2\pi)$, the function $\omega_\rho (z)=-e^{i\theta}K(\rho k(z))$ maps the unit disk $\mathbb{D}$ onto the domain $\mathbb{D}\setminus L(x, \theta)$, where
$L(x, \theta)=\big\{te^{i\theta}:{x}\leq t<1\big\}.$\\

The function $K\left(\rho k(z)\right)$ maps the point $-1$ to $-x$ which gives $\omega_\rho(-1)=x e^{i\theta}$. Furthermore, solving the boundary equation  $-e^{i\theta}K\left(\rho k(z)\right)=-1$ for $z$ yields the preimage $\omega_\rho\left(\xi(\rho,\theta)\right)=-1$, where $\xi(\rho,\theta)$ is defined in \eqref{s1-020}.\\

$(ii)~$ From the properties established in $(i)$, the auxiliary function $-e^{i\theta}K(r_1k(z))$ maps the unit disk $\mathbb{D}$ onto the domain $\mathbb{D}\setminus L(p_1, \theta)$.
Since the transformation $w \mapsto -K(rk(w))$ maps the unit disk $\mathbb{D}$ onto the disk slit along the real axis $\mathbb{D} \setminus [p, 1)$ and carries the radial segment $L(p_1, \theta)$ onto the curve $C(p_1, \theta)$, it follows that this secondary mapping transforms $\mathbb{D} \setminus L(p_1, \theta)$ onto the domain $\Omega(p, p_1, \theta) = \mathbb{D} \setminus \{ [p, 1) \cup C(p_1, \theta) \}$
Consequently, the composition function
$$h(z)=-K(rk(-e^{i\theta}K(r_1k(z))))=-K(rk(\omega_{r_1}(z)))$$
is a conformal mapping from $\mathbb{D}$ to $\Omega(p,p_1,\theta)=\mathbb{D}\setminus\big\{ [p,1)\cup C(p_1,\theta)\big\}.$\\

The boundary correspondences follow by direct evaluation. Since $\omega_{r_1}(-1) = p_1 e^{i\theta}$ and $-K(rk(p_1 e^{i\theta})) = q e^{i\phi}$, we obtain $h(-1) = q e^{i\phi}$. Similarly, using $\omega_{r_1}(\xi(r_1, \theta)) = -1$ and the fact that $-K(rk(-1)) = p$, we conclude that $h(\xi(r_1, \theta)) = p$.

\end{proof}
\begin{thm}\label{s1-thm-015}
For any distinct $\lambda,\mu\in\overline{\mathbb{D}}\setminus\{0\}$, the Bloch constant $B(\lambda,\mu)$ for the class $\mathcal{M}_1{(\lambda,\mu)}$ is infinite.
\end{thm}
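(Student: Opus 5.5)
Fix $f\in\mathcal{M}_1(\lambda,\mu)$; it suffices to show $B_f(\lambda,\mu)=\infty$. The plan follows the proofs of Theorem \ref{s1-thm-002} and Theorem \ref{s1-thm-001}: reduce to an analytic function on $\mathbb{D}$ with a simple pole on $\partial\mathbb{D}$, and then use Corollary \ref{s1-thm-012}. We split into cases according to where the poles lie. After a rotation $z\mapsto e^{-i\alpha}f(e^{i\alpha}z)$, which preserves the normalization and all the radii, we may assume one pole is real positive.

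\emph{Case 1: at least one pole lies on $\partial\mathbb{D}$.} Say $\lambda=1$. Near $z=1$ we can write $f(z)=h(z)/(z-1)$ with $h(1)\neq0$. Then $(1-t^2)|f'(t)|\to\infty$ as $t\to1^-$, exactly as in Theorem \ref{s1-thm-002}, so $f$ is not of finite Bloch type near $1$. To turn this into a large schlicht disk, the cleanest route is local and avoids the second pole. On a small disk $\mathbb{D}(1,\delta)$ chosen to exclude $\mu$, the function $1/f$ is analytic and univalent with $1/f(1)=0$. Hence $f$ maps $\mathbb{D}\cap\mathbb{D}(1,\delta)$ univalently onto the inversion of a region containing a sector-like neighbourhood of $0$ with opening about $\pi$. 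That inversion contains a half-plane-like region near $\infty$, which in turn contains schlicht disks of arbitrarily large radius. This is the same mechanism as for the class $Co(\alpha)$.

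\emph{Case 2: both poles lie in $\mathbb{D}$.} Write $\lambda=p\in(0,1)$ and $\mu=p_1e^{i\theta}$, with the second pole transported if necessary. Apply Lemma \ref{s1-thm-010}(ii) to obtain a conformal map $h$ of $\mathbb{D}$ onto the doubly slit domain $\Omega(p,p_1,\theta)$. The parameters are chosen so that the curve $C(p_1,\theta)$ emanates from the second pole; concretely, $p_1$ and $\theta$ are taken so that $qe^{i\phi}=\mu$. The slits $[p,1)$ and $C(p_1,\theta)$ start at the two poles, so $f\circ h$ is analytic in $\mathbb{D}$. By the boundary correspondences of the lemma, $f\circ h$ has simple poles at the boundary points $\xi(r_1,\theta)$ and $-1$. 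After normalizing by $(f\circ h)'(0)=h'(0)\neq0$, we get a function $g$ of the type covered by Corollary \ref{s1-thm-012}: it is analytic in $\mathbb{D}$ and has a simple pole on $\partial\mathbb{D}$. The Corollary gives $B_g=\infty$. Since $f(\mathbb{D})\supset f(h(\mathbb{D}))=h'(0)\,g(\mathbb{D})$, and schlicht disks of $g$ pull back to subregions of $\Omega\subset\mathbb{D}$ on which $f$ is univalent, we conclude $B_f(\lambda,\mu)=\infty$. The mixed case, with one pole inside and one on the boundary, reduces to Case 1 applied to the boundary pole.

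The main obstacle is the justification that a simple pole on the boundary actually yields \emph{schlicht} disks of unbounded radius. Case 1 handles this, and Case 2 inherits it through Corollary \ref{s1-thm-012}. An unbounded Bloch seminorm alone does not obviously produce large schlicht disks without invoking the Seidel--Walsh equivalence. I would make the local univalence argument via $1/f$ explicit: $1/f$ is univalent near the boundary pole and maps the part of $\mathbb{D}$ near it onto a region asymptotic to a half-disk at $0$. A second, lesser check is that the parameters in Lemma \ref{s1-thm-010} can be tuned so that the slits pass through both prescribed poles without the slits meeting; this follows from continuity and monotonicity of $t\mapsto -K(rk(te^{i\theta}))$.
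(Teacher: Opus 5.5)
Your Case 1 is correct, but Case 2 has a gap: $f\circ h$ does \emph{not} have simple poles at $\xi(r_1,\theta)$ and $-1$. These points are the preimages of the slit tips $p$ and $qe^{i\phi}$ respectively, and a conformal map of $\mathbb{D}$ onto a slit domain has a critical point at the preimage of a slit tip.

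Concretely, $k(z)=z/(1-z)^2$ has $k'(-1)=0$, so $\omega_{r_1}(z)-p_1e^{i\theta}$ vanishes to second order at $z=-1$. Since $-K(rk(\cdot))$ is conformal at the interior point $p_1e^{i\theta}$, this gives $h(z)-qe^{i\phi}\sim c\,(z+1)^2$. Likewise, $-K(rk(w))-p$ vanishes to second order at $w=-1=\omega_{r_1}(\xi(r_1,\theta))$, so $h(z)-p\sim c'(z-\xi(r_1,\theta))^2$.

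Thus $g$ has \emph{double} poles on $\partial\mathbb{D}$. Corollary \ref{s1-thm-012} does not apply as stated. Your claim that Case 2 ``inherits'' the local $1/f$ argument also fails, because $1/g$ has a double zero and is not univalent on a neighbourhood of the boundary point. In addition, ``transported if necessary'' does not cover $\mu\in(p,1)$: there $\mu$ lies on the slit $[p,1)$ and has no preimage $p_1e^{i\theta}$. The paper uses the automorphism $\psi$ for exactly this configuration. The paper's own proof makes the same simple-pole assertion; it can be repaired there, because the seminorm computation of Theorem \ref{s1-thm-002} still diverges for a double pole.

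The cleaner repair is to apply your Case 1 mechanism directly at an interior pole, which makes Lemma \ref{s1-thm-010} and the case split unnecessary. If $\lambda\in\mathbb{D}$, take $\delta$ so small that $\mathbb{D}(\lambda,\delta)\subset\mathbb{D}\setminus\{\mu\}$ and $1/f$ is analytic and univalent there. Its image contains some $\mathbb{D}(0,\epsilon)$, so $f$ maps $\mathbb{D}(\lambda,\delta)\setminus\{\lambda\}$ univalently onto a set containing $\{|w|>1/\epsilon\}$. Hence every $\mathbb{D}(3R,R)$ with $R>1/(2\epsilon)$ is a schlicht disk of $f$.

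Your Case 1 genuinely differs from the paper's route. The paper treats boundary poles via Corollary \ref{s1-thm-012}: it shows $\|f\|=\infty$ and appeals to the Seidel--Walsh/Pommerenke equivalence. That equivalence concerns functions analytic in $\mathbb{D}$, so it does not literally cover the mixed case where the other pole lies inside $\mathbb{D}$. Your local argument is elementary, exhibits the schlicht disks explicitly, and does not depend on where the second pole lies.
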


\begin{proof}
If either pole $\lambda$ or $\mu$ is located on the boundary $\partial\mathbb{D}$, then by Corollary \ref{s1-thm-012}, the Bloch constant $B(\lambda,\mu)$ is infinite. We now assume that both poles reside within the punctured disk, i.e., $\lambda, \mu \in \mathbb{D} \setminus \{0\}$. The relative positions of these poles lead to two distinct cases.

\textbf{Case 1: The poles $\lambda$ and $\mu$ do not lie on the same radial line.} In this case, without the loss of generality, we assume $\lambda=p\in(0,1)$ and $\mu=q e^{i\phi}$ with $\phi\neq 0$. Let $r$ and the mapping $-K(rk(z))$ be defined as in Lemma \ref{s1-thm-010}, which transforms $\mathbb{D}$ onto $\mathbb{D} \setminus [p,1)$. We define $p_1 e^{i\theta}$ as the preimage of $\mu = q e^{i\phi}$ under this map. Consequently, the function $h(z)$ from Lemma \ref{s1-thm-010} maps the unit disk conformally onto the simply connected domain $\Omega(p,p_1,\theta) = \mathbb{D} \setminus \{ [p,1) \cup C(p_1,\theta) \}$.\\

If $f\in \mathcal{M}_1{(p,q e^{i\phi})}$ then the restriction $f_1\equiv f|_{\Omega(p,p_1,\theta)}$ is analytic. Let $H$ denote the inverse of $h$. Clearly $H(z)$ is a conformal mapping with
$$H(p)=\xi(r_1,\theta) \in \partial\mathbb{D}~\text{and}~H(q e^{i\phi})=-1,
$$
where $r_1$ and $\xi(r_1,\theta)$ are defined as in Lemma \ref{s1-thm-010}. The composition $f_1 \circ h$ is analytic in $\mathbb{D}$ except for the points where $h$ maps to the poles of $f_1$. By choosing a suitable scaling constant $c > 0$, we define $g = c(f_1 \circ h)$ such that $g'(0) = 1$. The function $g$ is analytic in $\mathbb{D}$ and has simple poles at the boundary points $H(p)\in \partial\mathbb{D}$ and $-1\in \partial\mathbb{D}$. From the representation
 \begin{equation}\label{3}
     f_1(z)=\dfrac{1}{c}g\circ H(z),\quad z\in \Omega(p,p_1,\theta).
 \end{equation}
It follows that
\begin{equation}\label{4}
    f(\Omega(p,p_1,\theta))=f_1(\Omega(p,p_1,\theta))=\dfrac{1}{c}g(\mathbb{D})\subset f(\mathbb{D}).
\end{equation}
Since $g$ is analytic in the unit disk and has a simple pole at $z=-1\in\partial\mathbb{D}$, by Corollary \ref{s1-thm-012}, the radius of the largest schlicht disk contained in $g(\mathbb{D})$ is not finite. Consequently, $B(\lambda,\mu)$ is not finite. \\

\textbf{Case 2: The poles $\lambda$ and $\mu$ lie on the same radial line.} In this case, without the loss of generality, we assume $\lambda=p$ and $\mu=q$ with $0 < p < q < 1$.
For $f\in \mathcal{M}_1(p,q)$, let
$$
F_f(z)=\dfrac{(f\circ\psi)(z)}{(f\circ\psi)'(0)},\quad\text{where}~ \psi(z)=\dfrac{p+q-2z}{2-(p+q)z}
$$
is an automorphism of the unit disk $\mathbb{D}$. If
$$
z_1:=\dfrac{p-q}{2-q(p+q)}\quad\text{and}\quad z_2:=\dfrac{q-p}{2-p(p+q)}
$$
then a simple computation shows that $\psi(z_1)=q$ and $\psi(z_2)=p$. Hence, the function $F_f$ is analytic in $\mathbb{D}$ and has simple poles at the points $z=z_1,z_2$ with
$$
F_f(\mathbb{D})=\frac{1}{(f\circ\psi)'(0)} f(\mathbb{D}).
$$
Since $z_1<0$ and $z_2>0$, the poles of $F_f$ at $z_1$ and $z_2$ are situated on opposite sides of the origin, effectively placing them on different radial lines. By applying the result of \textbf{Case 1}, the image $F_f(\mathbb{D})$ contains disks of arbitrary radii. Because $f(\mathbb{D})$ is a scaled version of $F_f(\mathbb{D})$, it follows that $B(p,q)$ is likewise infinite.

\end{proof}

\begin{cor}\label{s1-thm-016}
For any distinct $\lambda,\mu\in\overline{\mathbb{D}}\setminus\{0\}$, the Landau constant $L(\lambda,\mu)$ for the class $\mathcal{M}_1{(\lambda,\mu)}$ is infinite.
\end{cor}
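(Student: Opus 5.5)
The corollary should follow directly from Theorem \ref{s1-thm-015}, via the elementary comparison between schlicht disks and arbitrary disks in the image. For any $f\in\mathcal{M}_1(\lambda,\mu)$, a schlicht disk contained in $f(\mathbb{D})$ is in particular a disk contained in $f(\mathbb{D})$. Hence
$$L_f(\lambda,\mu)\ge B_f(\lambda,\mu).$$
Taking the infimum over all $f\in\mathcal{M}_1(\lambda,\mu)$ gives $L(\lambda,\mu)\ge B(\lambda,\mu)$.

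Concretely, I would fix distinct $\lambda,\mu\in\overline{\mathbb{D}}\setminus\{0\}$ and an arbitrary $f\in\mathcal{M}_1(\lambda,\mu)$, and proceed as follows.

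\textbf{Step 1.} Recall the proof of Theorem \ref{s1-thm-015}, with its split into the cases where a pole lies on $\partial\mathbb{D}$, Case 1 and Case 2. In every case it shows that $B_f(\lambda,\mu)=\infty$, i.e. $f(\mathbb{D})$ contains schlicht disks of arbitrarily large radius.

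\textbf{Step 2.} Apply the inequality $L_f(\lambda,\mu)\ge B_f(\lambda,\mu)$ to conclude $L_f(\lambda,\mu)=\infty$.

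\textbf{Step 3.} Since $f$ was arbitrary, the infimum $L(\lambda,\mu)$ is infinite.

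There is essentially no obstacle; the only point needing care is that Theorem \ref{s1-thm-015} is stated for the constant $B(\lambda,\mu)$, while we need the pointwise statement $B_f(\lambda,\mu)=\infty$ for each $f$. Its proof does establish this pointwise statement. If one prefers not to reopen that proof, note that $B(\lambda,\mu)=\infty$ is the infimum of the values $B_f(\lambda,\mu)$. An infimum of a family of extended reals is $\infty$ only if every member equals $\infty$, so $B_f(\lambda,\mu)=\infty$ for all $f$, and the argument above goes through.
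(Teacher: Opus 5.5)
Your proposal is correct and matches the paper's own (implicit, unwritten) argument: the corollary follows from Theorem~\ref{s1-thm-015} via the trivial inequality $L_f(\lambda,\mu)\ge B_f(\lambda,\mu)$, just as the paper deduces $L_f(1)=\infty$ in Theorem~\ref{s1-thm-002}. The pointwise detour in your Steps 1--3 is harmless but unnecessary, since $L(\lambda,\mu)\ge B(\lambda,\mu)=\infty$ already finishes the proof.
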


\noindent\textbf{Declarations:\\}

\noindent\textbf{Data availability:}
Data sharing not applicable to this article as no data sets were generated or analyzed during the current study.\\

\noindent\textbf{Authors Contributions:}
All authors contributed equally to the investigation of this problem. The order of authorship is listed alphabetically by surname. All authors have read and approved the final manuscript.\\


\noindent\textbf{Conflict of interest:} The authors declare that they have no conflict of interest.

\end{document}